\tikzset{> =stealth}
\newcommand{\addQEDstyle}[2]{\AtBeginEnvironment{#1}{\pushQED{\qed}\renewcommand{\qedsymbol}{#2}}\AtEndEnvironment{#1}{\popQED}}
\theoremstyle{plain}
\newtheorem{theorem}{Theorem}[section]
\newtheorem{lemma}[theorem]{Lemma}
\newtheorem{proposition}[theorem]{Proposition}
\newtheorem{corollary}[theorem]{Corollary}
\theoremstyle{definition}
\newtheorem{definition}[theorem]{Definition}
\theoremstyle{remark}
\tikzset{> =stealth}
\tikzset{normalHead/.tip={Triangle[open,angle=60:4pt]},}
\tikzset{normalTail/.tip={Triangle[reversed,open,angle=60:4pt]},}
\newcommand{\cosplitext}[6]{
\tikz[baseline]{
\newdimen{\mylabelwidth}
\newdimen{\skipwidth}
\node[anchor=base] (A) {\hspace*{\dimexpr0.5pt-\pgfkeysvalueof{/pgf/inner xsep}}${#1}$};
\settowidth{\mylabelwidth}{\pgfinterruptpicture {$#2$} \endpgfinterruptpicture}
\pgfmathsetlength{\skipwidth}{max(\mylabelwidth,10pt)}
\node[right] (B) at ([xshift=\skipwidth+12pt]A.east) {${#3}$};
\settowidth{\mylabelwidth}{\pgfinterruptpicture {$#4$} \endpgfinterruptpicture}
\settowidth{\skipwidth}{\pgfinterruptpicture {$#5$} \endpgfinterruptpicture}
\pgfmathsetlength{\skipwidth}{max(\skipwidth,\mylabelwidth,10pt)}
\node[right] (C) at ([xshift=\skipwidth+12pt]B.east) {${#6}$\hspace*{\dimexpr0.5pt-\pgfkeysvalueof{/pgf/inner xsep}}};
\draw[transform canvas={yshift=0.5ex}, normalTail->] (A) to node [above] {${#2}$} (B);
\draw[transform canvas={yshift=0.5ex},-normalHead] (B) to node [above] {${#4}$} (C);
\draw[transform canvas={yshift=-0.5ex},->] (B) to node [below] {${#5}$} (A);
}}
\newcommand{\normalext}[5]{
\tikz[baseline]{
\newdimen{\mylabelwidth}
\newdimen{\skipwidth}
\node[anchor=base] (A) {\hspace*{\dimexpr0.5pt-\pgfkeysvalueof{/pgf/inner xsep}}${#1}$};
\settowidth{\mylabelwidth}{\pgfinterruptpicture {$#2$} \endpgfinterruptpicture}
\pgfmathsetlength{\skipwidth}{max(\mylabelwidth,12pt)}
\node[right] (B) at ([xshift=\skipwidth+10pt]A.east) {${#3}$};
\settowidth{\mylabelwidth}{\pgfinterruptpicture {$#4$} \endpgfinterruptpicture}
\pgfmathsetlength{\skipwidth}{max(\mylabelwidth,10pt)}
\node[right] (C) at ([xshift=\skipwidth+10pt]B.east) {${#5}$\hspace*{\dimexpr0.5pt-\pgfkeysvalueof{/pgf/inner xsep}}};
\draw[normalTail->] (A) to node [above] {${#2}$} (B);
\draw[-normalHead] (B) to node [above] {${#4}$} (C);
}}
\renewcommand{\epsilon}{\varepsilon}
\renewcommand{\phi}{\varphi}
\newcommand{\Gl}{\mathrm{Gl}}
\newcommand{\inv}{^{-1}}
\mathchardef\mhyphen="2D
\title{F-Inverse Monoids as Weakly Schreier Extensions}
\author[P. F. Faul]{Peter F. Faul}
\address{Stellenbosch University, Stellenbosch, South Africa}
\email{peter@faul.io}
\date{18 November 2024}
\subjclass[2020]{20M18}
\begin{document}

\maketitle
\thispagestyle{empty}

\begin{abstract}
It is known that an inverse monoid $M$ is E-unitary if and only if the following diagram is an extension: $\normalext{E(M)}{\subseteq}{M}{q}{M/\sigma}$, where $E(M)$ is the semilattice of idempotents and $q$ is the group quotient map. F-inverse monoids are another fundamental class of inverse semigroup and all F-inverse monoids are E-unitary. Thus given that F-inverse monoids have an associated extension it is natural to ask if these extensions satisfy any special properties. Indeed we show that $M$ is F-inverse if and only if the aforementioned extension is weakly Schreier. This latter result allows us to make use of relaxed factor systems to provide a new characterization of F-inverse monoids. We end by restricting to the Clifford case and find a new characterization of these with much in common with Artin gluings of frames.
\end{abstract}

\section{Introduction}

If one were asked to find a structure that simultaneous generalises a group and a semilattice, a reasonable first guess would be a monoid. However, this is unnecessarily general. There exists an intermediate structure known as an inverse monoid to which both classes of structure belong. 

\begin{definition}
    A monoid $M$ is inverse if for each $x \in M$ there exists a unique $x\inv \in M$ satisfying that $xx\inv x = x$ and $x\inv x x\inv = x\inv$.
\end{definition}

For groups the inverse is the usual group inverse and for semilattices the inverse of an element is just the element itself.

Idempotent elements are especially important in this setting and the element $xx\inv$ is necessarily such. The following is a well-known result.

\begin{theorem}
    Let $M$ be an inverse monoid and let $E(M)$ denote the subset of idempotents. Then $E(M)$ is a subsemilattice of $M$.
\end{theorem}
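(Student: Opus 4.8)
The plan is to reduce the statement to two facts: that $E(M)$ is closed under the multiplication of $M$, and that this multiplication is commutative on $E(M)$. Since every element of $E(M)$ is by definition idempotent, these two facts exhibit $E(M)$ as a commutative idempotent subsemigroup of $M$, i.e.\ a subsemilattice (and it contains $1 = 1\cdot 1$). Before either step I would record two immediate consequences of the uniqueness of inverses in $M$: first, if $e \in E(M)$ then $eee = e$ shows that $e$ is an inverse of itself, so $e\inv = e$; second, the defining equations for an inverse are symmetric in $a$ and $a\inv$, so $a$ is an inverse of $a\inv$ and hence $(a\inv)\inv = a$.

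The crucial step is closure. Given $e, f \in E(M)$, put $x = (ef)\inv$ and consider the auxiliary element $y = fxe$. Using only $ee = e$, $ff = f$ and the inverse equations $ef\,x\,ef = ef$ and $x\,ef\,x = x$, one checks that $y$ is idempotent and that $y$ is also an inverse of $ef$. By uniqueness of inverses $y = x$, so $x = (ef)\inv$ is idempotent; then $ef = x\inv = x$ (using $(a\inv)\inv = a$ with $a = ef$, and $x\inv = x$ since $x$ is idempotent), so $ef \in E(M)$.

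For commutativity, note that by the previous step both $ef$ and $fe$ lie in $E(M)$; a short computation (again using $ee = e$, $ff = f$ and the idempotency of $ef$ and $fe$) shows that $ef$ is an inverse of $fe$, and since $fe$ is its own inverse, uniqueness gives $ef = fe$. This completes the argument. The only genuinely non-routine point is the closure step, and within it the choice of the auxiliary element $y = fxe$: once one sees that this element is simultaneously idempotent and an inverse of $ef$, everything else follows mechanically from the uniqueness of inverses.
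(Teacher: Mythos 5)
Your proof is correct: the paper states this result without proof (citing it as well known), and your argument is the standard one, with the key step being the auxiliary element $y = f(ef)\inv e$, which one verifies is both idempotent and an inverse of $ef$, so that uniqueness of inverses forces $(ef)\inv$, and hence $ef$ itself, to be idempotent. The commutativity step via showing $ef$ is an inverse of the idempotent $fe$ is likewise the textbook route, so there is nothing to add.
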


There is also a natural order structure on an inverse monoid.

\begin{definition}
    Let $M$ be an inverse monoid and let $x,y \in M$. We say $x \le y$ if there exists $e \in E(M)$ such that $x = ey$.
\end{definition}

For more an inverse semigroups see \cite{lawson1998inverse,petrich}.

Of particular importance, at least for the purposes of this paper, is the class of E-unitary semigroups.  

\begin{definition}
    An inverse monoid $M$ is E-unitary if whenever $x \in M$, $e \in E(M)$ and $xe \in E(M)$ then $x \in E(M)$.
\end{definition}

It is always possible to quotient an inverse monoid such that a group results. We call the smallest such congruence $\sigma$. With this concept in hand we can now describe a result in \cite{margolis1987inverse} upon which much of this paper builds upon.

\begin{theorem}
    Let $M$ be an inverse monoid. Then $\normalext{E(M)}{\subseteq}{M}{q}{M/\sigma}$ is an extension if and only if $M$ is E-unitary.
\end{theorem}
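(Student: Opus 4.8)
The plan is to reduce the biconditional to a single inclusion of submonoids and then dispatch the two directions separately. First I would unwind what it means for $\normalext{N}{k}{E}{e}{B}$ to be an extension: $e$ is surjective and $k$ exhibits $N$ as the kernel $e\inv(1_B)$ of $e$ (and, as it happens, $e$ is then automatically the quotient collapsing the image of $k$, since identifying all idempotents of an inverse monoid with $1$ already forces the result to be a group). In our diagram $q$ is the canonical projection onto the group $M/\sigma$, hence surjective, and $E(M)$ is a submonoid of $M$ — it contains $1$ and is closed under products by the subsemilattice theorem above — included via an honest injection. Because $q$ takes values in a group, it sends every idempotent of $M$ to $1$, so the inclusion $E(M) \subseteq q\inv(1)$ holds with no hypothesis on $M$. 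Thus the theorem reduces to the claim that $q\inv(1) \subseteq E(M)$ if and only if $M$ is E-unitary, where $q\inv(1)$ is the $\sigma$-class of the identity, $\{x \in M : x \mathrel{\sigma} 1\}$.

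For the direction ``extension $\Rightarrow$ E-unitary'' I would argue directly, and I expect this to be painless. Assume $q\inv(1) \subseteq E(M)$ and take $x \in M$, $e \in E(M)$ with $xe \in E(M)$. Applying $q$ and using that $q$ annihilates idempotents, $q(x) = q(x)q(e) = q(xe) = 1$, so $x \in q\inv(1) \subseteq E(M)$, which is exactly E-unitarity.

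For the converse I would use the standard description of the minimum group congruence: $a \mathrel{\sigma} b$ iff $a$ and $b$ have a common lower bound for the natural partial order (see \cite{lawson1998inverse}). Suppose $M$ is E-unitary and $x \mathrel{\sigma} 1$; choose $z$ with $z \le x$ and $z \le 1$. From $z \le 1$ we get $z = f$ for some $f \in E(M)$, so $z$ is an idempotent, hence self-inverse; from $z \le x$ we get $z = gx$ for some $g \in E(M)$, whence $z = z\inv = x\inv g$. Then
\[
xz \;=\; x\,(x\inv g) \;=\; (xx\inv)\,g \;\in\; E(M),
\]
the membership holding because $xx\inv$ and $g$ are idempotents and $E(M)$ is a subsemilattice. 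Now $x \in M$, $z \in E(M)$, and $xz \in E(M)$, so E-unitarity forces $x \in E(M)$; hence $q\inv(1) \subseteq E(M)$.

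The forward direction is essentially immediate, so the only real content — and the step I would be most careful with — is the short computation in the converse: one must convert a common lower bound of $x$ and $1$ into an idempotent of the precise shape $x\cdot(\text{idempotent})$ that the stated form of E-unitarity can absorb, and the trick that achieves this is replacing $z = gx$ by its inverse $x\inv g$. Everything else is bookkeeping with the natural order and the semilattice $E(M)$.
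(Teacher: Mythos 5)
Your proof is correct. Note that the paper itself does not prove this statement at all --- it is quoted from \cite{margolis1987inverse} and used as a black box --- so there is no in-paper argument to compare against; what you have written is a sound, self-contained proof. Your reduction is the right one: the cokernel half of the extension condition is automatic (the congruence generated by collapsing $E(M)$ to $1$ is a group congruence containing no more than any group congruence must, hence equals $\sigma$), and the kernel half reduces to $q\inv(1) \subseteq E(M)$ since the reverse inclusion always holds. The forward direction is the one-line computation $q(x) = q(x)q(e) = q(xe) = 1$, and the converse correctly invokes the common-lower-bound description of $\sigma$ from \cite{lawson1998inverse}; the pivot $z = gx \Rightarrow z = z\inv = x\inv g$, giving $xz = xx\inv g \in E(M)$ in exactly the shape the paper's definition of E-unitary requires, is the genuine content and you have it right. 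The only thing I would make explicit if this were written out in full is the sentence justifying why the congruence generated by $\{(e,1) : e \in E(M)\}$ is itself already a group congruence (namely $[x][x\inv] = [xx\inv] = 1$), since that is what makes the cokernel condition automatic rather than an additional hypothesis.
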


Monoid extensions are generally not well-behaved. This was rectified somewhat by Redei after he introduced the notion of Schreier extensions \cite{redei1952verallgemeinerung} and found that this class of extensions were much like extensions of groups. This project has expanded over the years, with many new classes of extensions (of various degrees of well-behavedness) being introduced and studied \cite{leech1975two,grillet1974left,fleischer1981monoid,fulp1971structure}.

In this paper we study F-inverse monoids, a subset of E-unitary monoids.

\begin{definition}
    Let $M$ be an inverse monoid and let $\sigma$ be the minimal group congruence. We call $M$ F-inverse if each $\sigma$-class has a maximal element.
\end{definition}

We show that these are associated to weakly Schreier extensions. An overview of these extensions and their characterization can be found in \cite{faul2022survey}, though they were first studied and characterized in \cite{fleischer1981monoid}.

Given these results we can then apply characterizations of weakly Schreier extensions to F-inverse monoids to provide a new characterization for these objects. This simplifies even further in the Clifford case where we see that they behave somewhat like Artin gluings of frames.

\section{F-inverse monoids}
 In this section we will be concerned with diagrams of inverse monoids of the form \[\normalext{E(M)}{k}{M}{q}{M/\sigma},\] where $k$ the inclusion of the idempotents into $M$ and $q$ is the usual quotient map into $M/\sigma$.

Since all F-inverse monoids are E-unitary, we have that when $M$ is F-inverse, $\normalext{E(M)}{k}{M}{q}{M/\sigma}$ is an extension. 

\begin{definition}
    An extension $\normalext{N}{k}{G}{e}{H}$ is weakly Schreier if there exists a set theoretic splitting $s \colon H \to G$ such that for all $g \in G$ there exists an $n \in N$ such that $g = k(n)se(g)$.
\end{definition}

\begin{theorem}
    Let $M$ be an inverse monoid. Then $\normalext{E(M)}{k}{M}{q}{M/\sigma}$ is a weakly Schreier extension if and only if $M$ is F-inverse.
\end{theorem}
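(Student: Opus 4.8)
The plan is to establish both directions by carefully identifying what the weakly Schreier splitting $s \colon M/\sigma \to M$ must be. First I would recall that since $M/\sigma$ is the group quotient, a $\sigma$-class is precisely the fibre $q\inv(\{t\})$ for $t \in M/\sigma$, and that the natural order restricted to a single $\sigma$-class is exactly the order relevant here: if $x, y \in q\inv(\{t\})$ and $x \le y$ then $x = ey$ for some $e \in E(M)$, and conversely elements of a class are comparable exactly when one is obtained from the other by multiplying by an idempotent. So "each $\sigma$-class has a maximal element'' should translate into: for each $t$ there is $m_t \in q\inv(\{t\})$ such that every $x$ with $q(x) = t$ satisfies $x = e m_t$ for some $e \in E(M)$.

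For the ($\Leftarrow$) direction, suppose $M$ is F-inverse. Define $s \colon M/\sigma \to M$ by sending $t$ to the maximum element $m_t$ of the $\sigma$-class over $t$; this is a set-theoretic section of $q$ since $q(m_t) = t$. Given $g \in M$, put $t = q(g)$; then $g$ lies in the same $\sigma$-class as $s(t) = m_t$, and since $m_t$ is maximal in that class we get $g \le m_t$, i.e. $g = e\,m_t = k(e)\,s q(g)$ for some idempotent $e \in E(M)$. This is exactly the weakly Schreier condition with $n = e$, so the extension is weakly Schreier. The one point needing care is verifying that $g \le m_t$ genuinely follows from maximality in the $\sigma$-class — I would check that $g$ and $m_t$ have a meet (or that $g\,g\inv m_t$ lies in the class, is $\le m_t$ and is $\le g$... actually $g \, g\inv m_t \le m_t$ and one checks $g\,g\inv m_t \mathrel\sigma g$, so by maximality $g\,g\inv m_t = m_t$ is wrong; rather, one shows $g \le m_t$ directly because the class, ordered by $\le$, is down-directed, so a maximal element is a maximum). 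Let me instead argue: the $\sigma$-class of $g$ is closed under the meet operation $x, y \mapsto x y\inv y$ within $E$-unitary monoids — in an E-unitary monoid any two $\sigma$-equivalent elements are compatible and hence have a meet lying in the same class — so a maximal element of the class is automatically the top, giving $g \le m_t$.

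For the ($\Rightarrow$) direction, suppose the extension is weakly Schreier via some section $s$. Fix $t \in M/\sigma$ and set $m = s(t)$, so $q(m) = t$ and $m$ lies in the $\sigma$-class over $t$. For any $g$ in that class, $q(g) = t$, so the weakly Schreier condition gives $n \in E(M)$ with $g = k(n)\,s(t) = n m$, hence $g \le m$. Thus $m$ is the maximum of the $\sigma$-class over $t$; since $t$ was arbitrary, every $\sigma$-class has a (maximal, indeed maximum) element, so $M$ is F-inverse. Note this direction also needs $M$ to be E-unitary for the statement to even make sense as an extension, but $M$ being weakly Schreier-extension-presented forces it to be an extension, hence E-unitary by the cited theorem of \cite{margolis1987inverse}; alternatively one observes the argument produces maximal elements regardless.

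The main obstacle I anticipate is the bookkeeping in the ($\Leftarrow$) direction: one must confirm that "has a maximal element'' in the definition of F-inverse really yields a genuine \emph{maximum} of the $\sigma$-class (so that the single section value $m_t$ works for \emph{every} $g$ over $t$), which rests on the fact that within an E-unitary inverse monoid any two elements of the same $\sigma$-class are compatible and their meet again lies in that class — this is where the E-unitary hypothesis (automatic for F-inverse) is doing real work. Everything else is essentially unwinding definitions: the natural order $x \le y \iff x = ey$, the description of $\sigma$-classes as $q$-fibres, and the weakly Schreier condition $g = k(n)\,sq(g)$.
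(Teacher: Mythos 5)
Your proposal follows essentially the same route as the paper's proof: both directions amount to unwinding the weakly Schreier condition $g = k(n)\,sq(g)$ against the natural order, observing that it says precisely that $s(t)$ is a greatest element of the fibre $q^{-1}(t)$, i.e.\ of the $\sigma$-class over $t$. Your ($\Rightarrow$) direction is correct as written and matches the paper.

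The one genuine problem is the step you yourself flag as the crux of the ($\Leftarrow$) direction: you argue that because a $\sigma$-class in an E-unitary inverse monoid is down-directed (equivalently, closed under binary meets of compatible elements), a maximal element of the class must be its maximum. That inference is false: the three-element poset $\{a,b,c\}$ with $c<a$, $c<b$ and $a,b$ incomparable is down-directed and closed under binary meets, yet has two maximal elements and no greatest element. Indeed there exist E-unitary inverse monoids (suitable McAlister $P$-semigroups, for instance one built from a group of order two swapping two incomparable elements sitting below distinct tops) in which every $\sigma$-class has maximal elements but which are not F-inverse, so no argument of this shape can succeed. The resolution is that ``F-inverse'' is standardly defined --- and is what the paper's own proof uses when it speaks of the \emph{largest} element of the fibre --- to mean that each $\sigma$-class has a \emph{greatest} element, not merely a maximal one. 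Read that way, your ($\Leftarrow$) direction goes through immediately ($g \le m_t$ gives $g = e\,m_t$ for an idempotent $e$, which is the weakly Schreier condition) and the whole compatibility detour is unnecessary; read literally as ``maximal'', the implication fails and the gap cannot be repaired.
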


\begin{proof}
    It is clear that the $\sigma$ classes of $M$ are precisely the fibers of $q$.

    The weakly Schreier condition gives an element $s(g)$ in the fibre $e\inv\{g\}$ such that each other element $x$ in the fibre can be generated as some idempotent multiplied by $s(g)$. But this is exactly equivalent to stating that $s(g)$ is the largest element in the fibre.
\end{proof}

\subsection{F-inverse monoids and relaxed semidirect products}

In this section we make use of a characterization of weakly Schreier extensions to explain a resemblance F-inverse monoids have to semidirect products. The following is a well known result and may be found in \cite{petrich}

\begin{definition}\label{res:f-inverse}
    Let $G$ be a group and $Y$ a semilattice. We call $\cdot \colon G \times Y \to Y$ an \emph{almost action} of $G$ on $Y$ if it satisfies the following conditions.
    \begin{enumerate}
        \item $1\cdot y = y$,
        \item $g \cdot (y \wedge z) = g\cdot y \wedge g \cdot z$,
        \item $g \cdot (h \cdot y) = (gh) \cdot y \wedge g \cdot 1$. \label{item:notanaction}
    \end{enumerate}
    Given this data we define $F(Y,G) = \{(y,g) : y \le g \cdot 1\}$ with multiplication given by \[(y,g)(z,h) = (y (g\cdot z), gh).\]
    We call the resulting inverse semigroup an \emph{almost semidirect product}. 
\end{definition}

\begin{theorem}
    Every F-inverse semigroup $S$ is isomorphic to an almost semidirect product. In particular $G$ may be taken to be $S/\sigma$ and $Y$ taken to be $E(S)$.
\end{theorem}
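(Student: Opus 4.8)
The plan is to turn the maximal-element map of $S$ into both an almost action and the engine of an explicit isomorphism, so that no appeal to the general structure theory is strictly needed. Let $S$ be F-inverse, put $G = S/\sigma$ and $Y = E(S)$, and for $g \in G$ let $m_g$ be the maximum of the $\sigma$-class $q\inv(g)$; this exists precisely because $S$ is F-inverse, and it is exactly the set-theoretic splitting furnished by the weakly Schreier condition in the previous theorem. I would first isolate the facts on which everything rests: $m_1 = 1$ (any element above $1$ in the natural order is forced to be idempotent, hence below $1$, so the $\sigma$-class of $1$ has maximum $1$); $m_g m_h \le m_{gh}$ and $m_g\inv m_{gh} \le m_h$, since $q$ is a homomorphism, so $m_g m_h$ and $m_g\inv m_{gh}$ lie in the $\sigma$-classes of $gh$ and of $h$, whose maxima are $m_{gh}$ and $m_h$; conjugation $z \mapsto m_g z m_g\inv$ sends idempotents to idempotents and is order preserving on $E(S)$; and the meet in $E(S)$ is the product in $S$, with idempotents commuting.

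I would then set $g \cdot y := m_g\, y\, m_g\inv$ and check the three axioms of Definition~\ref{res:f-inverse}. Axiom (1) is immediate from $m_1 = 1$, and axiom (2) follows by writing both sides as products of idempotents and observing that the factor $m_g\inv m_g$ appearing in $g\cdot y \wedge g\cdot z$ commutes with $y$ and $z$ and is absorbed by $m_g$ on the left. Axiom (3), which I expect to be the main obstacle, I would prove by antisymmetry. Expanding the left side as $(m_g m_h)\,y\,(m_g m_h)\inv$ and writing $m_g m_h = e\, m_{gh}$ with $e = (m_g m_h)(m_g m_h)\inv \le m_g m_g\inv$ turns it into $e\cdot(m_{gh}\,y\,m_{gh}\inv)$, which is $\le (m_g m_g\inv)(m_{gh}\,y\,m_{gh}\inv) = (gh)\cdot y \wedge g\cdot 1$. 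For the reverse inequality, using $m_g\inv m_{gh} \le m_h$ one rewrites $(m_g m_g\inv) m_{gh}$ as $e'\, m_g m_h$ for a suitable idempotent $e'$ (and correspondingly $m_{gh}\inv(m_g m_g\inv) = m_h\inv m_g\inv e'$), whence $(m_g m_g\inv)(m_{gh}\,y\,m_{gh}\inv) = e'\,(m_g m_h)\,y\,(m_g m_h)\inv \le (m_g m_h)\,y\,(m_g m_h)\inv$; the two bounds give equality. With the axioms verified, $F(Y,G)$ is a bona fide almost semidirect product.

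Finally I would write down the isomorphism $\phi\colon S \to F(Y,G)$, $\phi(x) = (xx\inv, q(x))$. It is well defined since $x \le m_{q(x)}$ forces $xx\inv \le m_{q(x)} m_{q(x)}\inv = q(x)\cdot 1$, and it sends $1$ to the identity $(1,1)$ of $F(Y,G)$. Writing $x = (xx\inv)\,m_{q(x)}$ ---~valid because $x$ lies below the maximum of its class~--- the identity $\phi(xx') = \phi(x)\phi(x')$ reduces to shuffling commuting idempotents past one another. It is a bijection with inverse $(y,g) \mapsto y\,m_g$: indeed $q(y\,m_g) = g$ because $y$ is idempotent, and $(y\,m_g)(y\,m_g)\inv = y$ using $y \le m_g m_g\inv$. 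Since homomorphisms of inverse monoids automatically preserve inverses, the bijection $\phi$ is an isomorphism, with $G = S/\sigma$ and $Y = E(S)$ as asserted. (Alternatively the same conclusion drops out of the relaxed-factor-system description of weakly Schreier extensions applied to the splitting $m_{(-)}$; the computation just sketched is that machinery specialised to the case at hand.)
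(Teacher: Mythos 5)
Your proof is correct, but note that the paper does not actually prove this theorem: it presents it as a known result and cites Petrich, so there is no in-paper argument to compare against. What you have written is a complete, self-contained version of the standard argument --- defining the almost action by conjugation $g\cdot y = m_g\,y\,m_g\inv$ with $m_g$ the maximum of the $\sigma$-class, and the isomorphism by $x \mapsto (xx\inv, q(x))$ with inverse $(y,g)\mapsto y\,m_g$ --- and the key computations check out: the decompositions $m_gm_h = e\,m_{gh}$ and $(m_gm_g\inv)m_{gh} = e'\,m_gm_h$ coming from $m_gm_h \le m_{gh}$ and $m_g\inv m_{gh}\le m_h$ do yield the two inequalities needed for axiom (3), and the homomorphism property does reduce to commuting idempotents via $x = (xx\inv)m_{q(x)}$. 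Two very minor glosses: your justification of $m_1=1$ (``any element above $1$ is idempotent'') is phrased oddly --- the cleaner reason is that $q\inv(1)=E(S)$ by E-unitarity and every idempotent satisfies $e = e\cdot 1\le 1$; and you do not explicitly verify that $F(Y,G)$ is closed under its multiplication, though this is folded into the paper's Definition~\ref{res:f-inverse} and follows easily from $y(g\cdot z)\le (m_gm_h)(m_gm_h)\inv\le m_{gh}m_{gh}\inv$. Neither affects the validity of the argument.
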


We see that an almost action differs from an action by requiring multiplication by the term $g \cdot 1$ in order to decompose $(gh) \cdot y$ into $g \cdot (h \cdot y)$ and that the resulting almost semidirect product differs from an actual semidirect product by containing only a subset of the product $Y \times G$. 
This latter observation brings to mind $\lambda$-semidirect products of inverse semigroups.



In \cite{faul2021lambda} it was shown that $\lambda$-semidirect products of monoids are weakly Schreier split extensions and in \cite{faul2021characterization} weakly Schreier split extensions were characterized in terms of a \emph{relaxed semidirect product} construction. 

While a relaxed semidirect product construction does not exist for weakly Schreier extensions, there is a characterization analogous to the factor system characterization of group extensions \cite{fleischer1981monoid,faul2022survey}.

\begin{definition}\label{deffac}
Let $H$ and $N$ be monoids, $\sim_-$ an $H$-indexed equivalence relation on $N$, $\cdot \colon H\times N \to N$ and $\chi \colon H \times H \to N$. We call $(\sim_-,\cdot,\chi)$ a factor system when it satisfies the following properties.
\begin{enumerate}
    \item $n_1 \sim_1 n_2$ implies $n_1 = n_2$,
    \item $n_1 \sim_h n_2$ implies $xn_1 \sim_h xn_2$,
    \item $n_1 \sim_{h_1} n_2$ implies $n_1\chi(h_1,h_2) \sim_{h_1h_2} n_2\chi(h_1,h_2)$,
    \item $n_1 \sim_h n_2$ implies $n_1(h\cdot n) \sim_h n_2(h \cdot n)$ for all $n \in N$,
    \item $n_1 \sim_{h_2} n_2$ implies $(h_1 \cdot n_1)\chi(h_1,h_2) \sim_{h_1h_2} (h_1 \cdot n_2)\chi(h_1,h_2)$ for all $h_1 \in H$,
    \item $(h \cdot n_1n_2) \sim_h (h \cdot n_1)\cdot(h\cdot n_2)$,
    \item $\chi(h_1,h_2)(h_1h_2 \cdot n) \sim_{h_1h_2} (h_1 \cdot (h_2 \cdot n))\chi(h_1,h_2)$,
    \item $(h \cdot 1) \sim_h 1$,
    \item $(1 \cdot n) \sim_1 n$,
    \item $\chi(1,h) \sim_h 1 \sim_h \chi(h,1)$,
    \item $\chi(x,y)\chi(xy,z) \sim_{xyz} (x \cdot \chi(y,z))\chi(x,yz)$.
\end{enumerate}
\end{definition}

This is just like the usual characterization of a group extension, except that an $H$-indexed equivalence relation has been introduced and the usual axioms of an action and factor set are required now to only hold up to $H$-equivalence.

\begin{definition}
Given a factor system $(\sim_-,\cdot,\chi)$ of $H$ on $N$ we can define a monoid $N \ltimes_\chi H$ with underlying set $\bigsqcup_{h \in H}N/{\sim}_h$ and multiplication \[([n],h)([n'],h') = ([n (h\cdot n') \chi(h,h')],hh').\] 
\end{definition}

If $(\sim_-,\cdot,\chi)$ is the relaxed factor system associated to some weakly Schreier extension, the above monoid is isomorphic to the center of that extension.

Now we know that F-inverse monoids correspond to weakly Schreier extensions and so have a characterization in terms of these relaxed factor sets. Of interest is that this characterization can be related to the almost actions and almost semidirect product introduced above. 

\begin{proposition}
    Consider an almost action $\cdot$ of a group $G$ on a semilattice of $Y$. Let $y \sim_g z$ if and only if $y \wedge (g \cdot 1) = z \wedge (g \cdot 1)$, and $\chi(g,h) = g \cdot 1$. Then $(\sim_-,\cdot,\chi)$ is a factor system
\end{proposition}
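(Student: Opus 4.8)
The plan is to verify each of the eleven axioms of \cref{deffac} directly, using the three defining properties of an almost action and the fact that in a semilattice $a \wedge b = c \wedge b$ is a congruence-like relation compatible with meets. First I would record the basic consequences of the almost action axioms that will be used repeatedly: that $g \cdot 1$ is idempotent (indeed every element of $Y$ is), that $g \cdot$ is monotone and meet-preserving, that $g \cdot (g\inv \cdot 1) = 1 \wedge g \cdot 1 = g \cdot 1$ wait — more usefully, from axiom~\ref{item:notanaction} with $h = g\inv$ we get $g \cdot (g\inv \cdot y) = y \wedge g\cdot 1$, and setting $y = 1$ gives a useful identity. I would also note that since $\chi(g,h) = g \cdot 1$ does not depend on $h$, axioms~(3), (5), (7), (10), (11) simplify considerably.

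Next I would handle the ``shape'' axioms that do not involve $\chi$: axiom~(1) is immediate since $y \sim_1 z$ means $y \wedge (1 \cdot 1) = z \wedge (1 \cdot 1)$, i.e. $y \wedge 1 = z \wedge 1$; but $1$ is the top of $Y$ (as $Y$ is a semilattice with a monoid identity acting as top), so $y = z$. Axiom~(2) ($n_1 \sim_h n_2 \Rightarrow xn_1 \sim_h xn_2$) follows because multiplication in a semilattice is meet and meet distributes over meet. Axiom~(9) is the same computation as~(1) using $1 \cdot y = y$ from the first almost action axiom. Axiom~(8), $(h \cdot 1) \sim_h 1$, amounts to $(h\cdot 1)\wedge(h\cdot 1) = 1 \wedge (h \cdot 1)$, true since $1$ is top. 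Axioms~(4) and~(6) are again meet-distributivity plus the meet-preservation of the almost action.

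Then I would treat the axioms that genuinely use the structure of $\chi$ and axiom~\ref{item:notanaction}. Axiom~(10), $\chi(1,h) \sim_h 1 \sim_h \chi(h,1)$, requires $\chi(1,h) = 1\cdot 1 = 1$ and $\chi(h,1) = h \cdot 1$, so this reduces to~(8) and triviality. Axioms~(3) and~(5): since $\chi(h_1,h_2) = h_1 \cdot 1$, the claim $n_1\chi(h_1,h_2) \sim_{h_1h_2} n_2\chi(h_1,h_2)$ needs $(n_1 \wedge h_1 \cdot 1) \wedge (h_1h_2 \cdot 1) = (n_2 \wedge h_1 \cdot 1)\wedge(h_1h_2\cdot 1)$, and here I would use axiom~\ref{item:notanaction} in the form $h_1h_2 \cdot 1 \le h_1 \cdot 1$ — indeed $h_1 \cdot (h_2 \cdot 1) = (h_1h_2)\cdot 1 \wedge h_1 \cdot 1$ shows $(h_1h_2)\cdot 1 \wedge h_1\cdot 1 = h_1 \cdot (h_2 \cdot 1) \le h_1 \cdot 1$, and also $\le (h_1h_2)\cdot 1$ is not immediate, so I should instead argue that $\wedge (h_1 h_2 \cdot 1)$ already absorbs $\wedge (h_1 \cdot 1)$; this is the point where I must be careful. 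The remaining axioms~(7) and~(11) are the real content: axiom~(7), $\chi(h_1,h_2)(h_1h_2 \cdot n) \sim_{h_1h_2} (h_1 \cdot (h_2 \cdot n))\chi(h_1,h_2)$, unwinds to $(h_1 \cdot 1) \wedge (h_1h_2 \cdot n) \wedge (h_1h_2 \cdot 1) = (h_1\cdot(h_2\cdot n)) \wedge (h_1 \cdot 1) \wedge (h_1h_2 \cdot 1)$, and by axiom~\ref{item:notanaction} the right side is $(h_1h_2 \cdot n \wedge h_1 \cdot 1) \wedge (h_1 \cdot 1) \wedge (h_1 h_2 \cdot 1)$, which matches; axiom~(11) is the associativity-type identity $(x \cdot 1)(xy \cdot 1) \sim_{xyz} (x \cdot \chi(y,z))(x \cdot 1)$ with $\chi(y,z) = y \cdot 1$, so the right side is $(x \cdot (y \cdot 1)) \wedge (x \cdot 1)$, which by axiom~\ref{item:notanaction} equals $(xy \cdot 1) \wedge (x \cdot 1)$, and meeting both sides with $xyz \cdot 1$ and using $x \cdot 1 \ge xy \cdot 1 \ge xyz \cdot 1$ (all of which follow from iterating axiom~\ref{item:notanaction}) gives equality.

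The main obstacle is getting the inequalities between the elements $x \cdot 1$, $xy \cdot 1$, $xyz \cdot 1$ exactly right from axiom~\ref{item:notanaction}: one must show $(gh)\cdot 1 \le g \cdot 1$ cleanly, and more generally that meeting with a ``longer word'' $xyz \cdot 1$ subsumes meeting with the shorter $x \cdot 1$. I expect this follows by applying axiom~\ref{item:notanaction} with $y = 1$ to get $g \cdot (h \cdot 1) = (gh)\cdot 1 \wedge g \cdot 1$, hence $(gh) \cdot 1 \wedge g \cdot 1 = g \cdot (h \cdot 1) \le g \cdot 1$ and also $g \cdot (h \cdot 1) \le g \cdot 1$ is automatic, but I actually need $(gh) \cdot 1 \le g \cdot 1$ itself, which requires knowing $(gh)\cdot 1 = (gh)\cdot 1 \wedge g \cdot 1$, equivalently $(gh) \cdot 1 \le g \cdot 1$ — this should come from applying axiom~\ref{item:notanaction} to $y = h\inv \cdot 1$ or by a symmetric argument, and pinning down that small lemma is where care is needed. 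Once that semilattice-of-``defined'' elements lemma is in place, every axiom is a short meet computation.
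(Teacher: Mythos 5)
Your overall strategy --- checking all eleven conditions of \cref{deffac} by direct meet computations --- is exactly the paper's, and most of your individual verifications (in particular (1), (2), (6), (7), (8), (9), (10)) are correct. But the auxiliary inequality you lean on for (3)/(5) and (11), namely $(gh)\cdot 1 \le g\cdot 1$ (and its iterate $x\cdot 1 \ge xy\cdot 1 \ge xyz\cdot 1$), is \emph{false} in general, and you correctly sense it is the delicate point but wrongly expect it to be provable. Counterexample: let $G = \Z/2 = \{1,a\}$ act on the two-element chain $Y = \{1 > e\}$ by $a\cdot 1 = a\cdot e = e$ and $1\cdot y = y$. This satisfies all three almost-action axioms (for the third, $a\cdot(a\cdot y) = e = y\wedge e = (a^2\cdot y)\wedge(a\cdot 1)$), yet $(aa)\cdot 1 = 1 \not\le e = a\cdot 1$. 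The third almost-action axiom only gives you the \emph{identity} $(g\cdot 1)\wedge((gh)\cdot 1) = g\cdot(h\cdot 1)$, never the one-sided comparison.

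Fortunately none of the conditions actually needs that inequality, so the gap is repairable without changing your approach. For (3) and (4) no absorption is required at all: from $n_1\wedge(h_1\cdot 1) = n_2\wedge(h_1\cdot 1)$ you simply meet both sides with the extra term $(h_1h_2\cdot 1)$ (resp.\ $h\cdot n$) and you are done. For (5) and (11) you should use the identity above as a two-sided rewriting rule rather than an inequality: in (5), $(g_1\cdot n_i)\wedge(g_1\cdot 1)\wedge(g_1g_2\cdot 1) = (g_1\cdot n_i)\wedge g_1\cdot(g_2\cdot 1) = g_1\cdot(n_i\wedge(g_2\cdot 1))$ by meet-preservation, and the hypothesis $n_1\sim_{g_2}n_2$ finishes it; in (11), the right-hand side $(x\cdot(y\cdot 1))\wedge(x\cdot 1)$ equals $((xy)\cdot 1)\wedge(x\cdot 1)$ on the nose, which is already the left-hand side, so no meeting with $(xyz)\cdot 1$ and no chain of inequalities is needed. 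This is precisely how the paper's proof handles those cases. Your treatment of (7) already does this correctly, so the same pattern applied to (5) and (11) closes the argument.
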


\begin{proof}
    Let us go through each of the 11 conditions individually.
    \begin{enumerate}
        \item Assume $y_1 \sim_1 y_2$ and note that $y_1 = y_1 \wedge (1 \cdot 1) = y_2 \wedge (1 \cdot 1) = y_2$.
        \item Assume $y_1 \sim_g y_2$ and consider the following calculation: $x \wedge y_1 \wedge (g \cdot 1) = x \wedge y_2 \wedge (g \cdot 1)$.
        \item Assume $y_1 \sim_{g_1} y_2$ and consider $y_1 \wedge \chi(g_1,g_2) \wedge (g_1g_2 \cdot 1) = y_1 \wedge (g_1 \cdot 1) \wedge (g_1g_2 \cdot 1) = y_2 \wedge (g_1 \cdot 1) \wedge (g_1g_2 \cdot 1)$.
        \item Assume $y_1 \sim_h y_2$ and let $y \in N$, then $y_1 \wedge (g \cdot y) \wedge (g \cdot 1) = y_2 \wedge (g \cdot y) \wedge (g \cdot 1)$.
        \item Assume $y_1 \sim_{g_2} y_2$ and let $g_1 \in G$. We have $(g_1 \cdot y_1)(g_1 \cdot 1) \wedge (g_1g_2 \cdot 1) = (g_1 \cdot y_1) \wedge g_1(g_2 \cdot 1) = g_1 \cdot (y_1 \wedge (g_2 \cdot 1)) = g_1 \cdot (y_2 \wedge (h_2 \cdot 1)) = g_1 \cdot y_2)(g_1 \cdot 1) \wedge (g_1g_2 \cdot 1)$.
        \item This condition follows automatically from the fact that $\cdot$ is an almost action.
        \item Consider the following calculation: $(g_1 \cdot 1) \wedge (g_1g_2 \cdot y) \wedge (g_1g_2 \cdot 1) = g_1 \cdot (g_2 \cdot n) \wedge (g_1 \cdot 1) \wedge (g_1g_2 \cdot 1)$.
        \item This follows trivially.
        \item This also follows trivially.
        \item Consider $(1 \cdot 1) \wedge (h \cdot 1) = 1 \wedge (h \cdot 1) = (h \cdot 1) \wedge (h \cdot 1)$
        \item Finally, $(x \cdot 1) \wedge (xy \cdot 1) = x \cdot (y \cdot 1) \wedge (x \cdot 1)$.
    \end{enumerate}
\end{proof}

\begin{theorem}
    Let $\cdot$ be an almost action of a group $G$ on a semilattice $Y$ and let $(\sim_-,\cdot,\chi)$ be the associated factor system. Then $F(Y,G) \simeq Y \ltimes_\chi G$.
\end{theorem}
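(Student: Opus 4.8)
The plan is to produce an explicit isomorphism $\Phi \colon Y \ltimes_\chi G \to F(Y,G)$ by choosing, for each $g \in G$, a canonical transversal for the equivalence relation $\sim_g$. First I would record the structure of $\sim_g$: since $y \sim_g z$ means precisely $y \wedge (g\cdot 1) = z \wedge (g\cdot 1)$, every $\sim_g$-class $[y]$ contains the element $y \wedge (g\cdot 1)$, and this element lies below $g \cdot 1$; conversely two elements of $Y$ lying below $g\cdot 1$ are $\sim_g$-equivalent only if they are equal. Hence $[y] \mapsto y\wedge(g\cdot 1)$ is a bijection from $Y/{\sim}_g$ onto $\da(g\cdot 1) = \{w \in Y : w \le g\cdot 1\}$, and assembling these bijections over $g$ yields a bijection
\[\Phi \colon Y\ltimes_\chi G \to \textstyle\bigsqcup_{g\in G}\da(g\cdot 1) = F(Y,G), \qquad ([y],g)\mapsto (y\wedge(g\cdot 1),\,g).\]
It sends the unit $([1],1)$, with $1$ the top element of $Y$, to $(1\wedge(1\cdot 1),1) = (1,1)$, the unit of $F(Y,G)$.

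The substantive step, and the only real obstacle, will be checking that $\Phi$ is multiplicative. Writing the semilattice operation as $\wedge$ and using $\chi(g,h) = g\cdot 1$, one computes on the one hand
\[\Phi\big(([y],g)([z],h)\big) = \big(y\wedge(g\cdot z)\wedge(g\cdot 1)\wedge(gh\cdot 1),\ gh\big),\]
and on the other hand, from the multiplication of $F(Y,G)$,
\[\Phi([y],g)\,\Phi([z],h) = \big((y\wedge(g\cdot 1))\wedge\big(g\cdot(z\wedge(h\cdot 1))\big),\ gh\big).\]
To reconcile the two I would invoke condition (2) of an almost action to get $g\cdot(z\wedge(h\cdot 1)) = (g\cdot z)\wedge(g\cdot(h\cdot 1))$, together with condition (3) applied with $y=1$ to get $g\cdot(h\cdot 1) = (gh\cdot 1)\wedge(g\cdot 1)$; substituting and using commutativity and idempotency of $\wedge$ shows both expressions equal $(y\wedge(g\cdot z)\wedge(g\cdot 1)\wedge(gh\cdot 1),\ gh)$.

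Well-definedness of the multiplication on $Y\ltimes_\chi G$ is not in question, being guaranteed by the preceding proposition (which establishes that $(\sim_-,\cdot,\chi)$ is a factor system), so the remaining work is entirely the bookkeeping in the multiplicativity identity above — in particular tracking where the cocycle factor $g\cdot 1$ and the truncation factor $gh\cdot 1$ enter on each side — and this is routine once conditions (2) and (3) are in hand. Finally, since a bijective semigroup homomorphism is an isomorphism and an isomorphism of inverse monoids automatically respects the inversion operation, $\Phi$ witnesses $F(Y,G)\simeq Y\ltimes_\chi G$.
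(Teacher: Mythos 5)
Your proposal is correct and is essentially the paper's argument run in the opposite direction: the paper defines $\phi\colon F(Y,G)\to Y\ltimes_\chi G$, $(y,g)\mapsto([y]_g,g)$, checks multiplicativity there (where the cocycle factor $g\cdot 1$ is absorbed since $g\cdot z\le g\cdot 1$), and exhibits your $\Phi$ as its two-sided inverse $\psi$. Your direct verification that $\Phi$ is a multiplicative bijection via the transversal $[y]\mapsto y\wedge(g\cdot 1)$ is a sound, equivalent packaging of the same isomorphism.
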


\begin{proof}
    Let $\phi \colon F(Y,G) \to Y \ltimes_\chi G$ send $(y,g)$ to $([y]_g,g)$.

    To see that $\phi$ is a homomorphism consider the following calculation.

    \begin{align*}
        \phi(y,g)\phi(z,h)  &= ([y]_g,g)([z]_h,h) \\
                            &= ([y \wedge (g \cdot z)]_gh,gh) \\
                            &= \phi(y \wedge (g \cdot z),gh) \\
                            &= \phi((y,g)(z,h)).
    \end{align*}    

    To show that $\phi$ is an isomorphism consider the candidate inverse $\psi\colon Y \ltimes_\chi G \to F(Y,G)$ which sends $([y]_g,g)$ to $(y \wedge (g \cdot 1),g)$. Since $y \sim_g z$ if and only if $y \wedge (g \cdot 1) = z \wedge (g \cdot 1)$ we have that $\psi$ is well-defined.

    First we consider $\phi\psi([y]_g,g) = \phi(y \wedge (g \cdot 1), g) = ([y \wedge (g \cdot 1)]_g,g) = ([y]_g,g)$. 

    Finally consider $\psi\phi(y,g) = \psi([y]_g,g) = (y \wedge (g \cdot 1),g)$. Recall that $(y,g) \in F(Y,G)$ only if $y \le g \cdot 1$ and so $\psi\phi(y,g) = (y,g)$.
\end{proof}

This explains the apparent similarities between $F(Y,G)$ and a semidirect product and provides a new way of thinking about F-inverse semigroups --- as pairs of elements $([y]_g,g)$ and with multiplication $([y],g)([z],h) = ([y \wedge (g \cdot z) \wedge (g \cdot 1)],gh)$.

\subsection{Clifford semigroups and Artin gluings}

The extensions associated to Clifford semigroups have some unique aspects to them. For instance, if $M$ is a Clifford semigroup, the kernel map $k$ has a splitting $\ell\colon M \to E(M)$, such that $\ell(m) = mm\inv$. In general such a mapping will not be a homomorphism.

This makes it into something that may be termed a cosplit extension $\cosplitext{E(M)}{k}{M}{q}{\ell}{M/\sigma}$, where the kernel map has a retraction $\ell$. We make little use of this observation in the remained of this paper, though it clearly warrants further investigation. Instead we focus on a simpler characterization of Clifford F-inverse monoids. 

Artin gluings are a topological construction which allow for two spaces to be embedded into a larger space with one open and the other its closed complement \cite{wraith1974artin}. This construction has an analogue for frames, the algebraic structure corresponding to a lattice of open sets. Frames may be viewed as particular kinds of ordered monoids and it is in this context that we can generalise the construction.

\begin{definition}
    Let $H$ and $N$ be commutative, ordered monoids and let $f \colon H \to N$ be a monoid homomorphism. Then we can construct the Artin gluing $\Gl(f) = \{(n,h) \in N \times H : n \le f(h)\}$ with pointwise multiplication. 
\end{definition}

Artin gluings naturally form weakly Schreier split extensions \cite{faul2021lambda}. Our extensions are not split and so we introduce the following modification to the definition.

\begin{definition}
    Let $G$ be a group and $Y$ a semilattice and let $f\colon G \to Y$ satisfy that $f(gh) \wedge f(g) = f(g) \wedge f(h)$. We define the gluing of $f$ as \[\Gl(f) = \{(y,g) \in Y \times G : y \le f(g)\},\]
    with multiplication $(y,g)(z,h) = (y \wedge z,gh)$.
\end{definition}

For the traditional Artin gluing one asks that $f(gh) = f(g) \wedge f(h)$. Our modification accords with the fact that the resulting extension will be only weakly Schreier and not weakly Schreier split.

\begin{proposition}
    Let $G$ be a group and $Y$ a semilattice and let $f\colon G \to Y$ satisfy that $f(gh) \wedge f(g) = f(g) \wedge f(h)$. Then $\Gl(f)$ is an F-inverse Clifford semigroup.
\end{proposition}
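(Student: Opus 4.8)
The plan is to verify the structure of $\Gl(f)$ in the obvious stages: first that it is a monoid, then that it is inverse and even Clifford, and finally that it is F-inverse. To begin, I would check that the multiplication is well defined: if $y \le f(g)$ and $z \le f(h)$, then $y \wedge z \le f(g) \wedge f(h) = f(gh) \wedge f(g) \le f(gh)$, using the hypothesis on $f$, so $(y \wedge z, gh) \in \Gl(f)$. Associativity is immediate from associativity of $\wedge$ and of $G$. For the identity, I would first extract from the hypothesis, by setting $h = 1$, that $f(g) = f(g) \wedge f(1)$, i.e. $f(g) \le f(1)$ for every $g$; then $(f(1), 1) \in \Gl(f)$, and it is a two-sided identity because $f(1) \wedge y = y$ whenever $y \le f(g) \le f(1)$. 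Hence $\Gl(f)$ is a monoid.

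Next I would identify the idempotents. An element $(y, g)$ satisfies $(y,g)^2 = (y,g)$ iff $g^2 = g$ iff $g = 1$, so $E(\Gl(f)) = \{(y, 1) : y \le f(1)\} \cong \da f(1)$, and these clearly commute since $(y,1)(z,1) = (y \wedge z, 1) = (z,1)(y,1)$. For regularity, I would set $h = g\inv$ in the hypothesis to get $f(g) = f(g) \wedge f(g\inv)$, hence $f(g) \le f(g\inv)$; therefore $y \le f(g)$ forces $y \le f(g\inv)$, so $(y, g\inv) \in \Gl(f)$, and a short computation gives $(y,g)(y,g\inv)(y,g) = (y,g)$ and $(y,g\inv)(y,g)(y,g\inv) = (y,g\inv)$. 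By the standard characterization of inverse semigroups as the regular semigroups with commuting idempotents, $\Gl(f)$ is an inverse monoid with $(y,g)\inv = (y,g\inv)$. Moreover $(y,1)(z,g) = (y \wedge z, g) = (z,g)(y,1)$, so the idempotents are central and $\Gl(f)$ is a Clifford semigroup.

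Finally, for the F-inverse property I would compute the natural partial order and the minimum group congruence $\sigma$. One checks that $(a,k) \le (b,l)$ iff $k = l$ and $a \le b$ in $Y$; consequently two elements $(y,g)$ and $(z,h)$ admit a common lower bound iff $g = h$, with $(y \wedge z, g)$ serving as a witness. Using the usual description of $\sigma$ in an inverse semigroup as the relation of possessing a common lower bound, the $\sigma$-classes are precisely the sets $\da f(g) \times \{g\}$ (so in particular $\Gl(f)/\sigma \cong G$ via the second projection). Each such class has maximum element $(f(g), g)$, and therefore $\Gl(f)$ is F-inverse.

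I expect the only genuine content, as opposed to bookkeeping, to be squeezing the three facts used above — closure of the multiplication, $f(g) \le f(1)$, and $f(g) \le f(g\inv)$ — out of the single axiom $f(gh) \wedge f(g) = f(g) \wedge f(h)$; once these are available, the remaining verifications are routine. The one point requiring a little care is that $Y$ need not have a top element, so the identity of $\Gl(f)$ must be taken to be $(f(1),1)$ rather than a pair involving a nonexistent top of $Y$.
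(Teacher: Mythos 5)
Your proof is correct, but it takes a genuinely different and more self-contained route than the paper. The paper's proof is very short: it asserts that the idempotents are the pairs $(y,1)$ and that they are central, and then establishes the F-inverse property indirectly by exhibiting $\normalext{Y}{k}{\Gl(f)}{e}{G}$ as a weakly Schreier extension (with $s(g) = (f(g),g)$ witnessing $(y,g) = (y,1)(f(g),g)$) and invoking the earlier theorem that an inverse monoid is F-inverse precisely when this extension is weakly Schreier. You instead verify everything from first principles: closure and the identity element, regularity via $(y,g)\inv = (y,g\inv)$, the ``regular with commuting idempotents'' characterization of inverse semigroups, centrality of idempotents for the Clifford property, and then a direct computation of the natural partial order and of $\sigma$ as the common-lower-bound relation, from which each $\sigma$-class $\da f(g) \times \{g\}$ visibly has maximum $(f(g),g)$. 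Your derivations of $f(g) \le f(1)$ and $f(g) \le f(g\inv)$ from the single axiom are exactly the right extractions and are not spelled out in the paper. What your approach buys is completeness: the paper never actually checks that $\Gl(f)$ is an inverse monoid (which is needed before one can even speak of $E(M)$, $\sigma$, or the associated extension), and it tacitly takes $(y,1)$ to be defined for every $y \in Y$, which requires $f(1)$ to be the top of $Y$ --- a point you rightly flag by taking the identity to be $(f(1),1)$ and the idempotents to be $\da f(1)$. What the paper's approach buys is brevity and a conceptual link to the weakly Schreier machinery that the rest of the section is built on; if you wanted to align with the paper you could replace your final paragraph on $\sigma$ by the observation that $(y,g) = (y,1)(f(g),g)$ verifies the weakly Schreier condition.
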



\begin{proof}
    The idempotent elements are those of the form $(y,1)$ and it is evident from the definition of multiplication that these are central.
    We demonstrate that $\Gl(f)$ is F-inverse by forming the associated weakly Schreier extension $\normalext{Y}{k}{\Gl(f)}{e}{G}$ with $k(y) = (y,1)$ and $e(y,g) = g$. The map $k$ is clearly the kernel of $e$ and so it suffices to demonstrate that the weakly Schreier condition holds. Let $s(g) = (f(g),g)$ and note that $(y,g) = (y,1)(f(g),g)$. 
\end{proof}

\begin{corollary}
    Let $M$ be an F-inverse Clifford semigroup with $M/\sigma$ abelian. Then $M$ is commutative.
\end{corollary}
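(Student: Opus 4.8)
The plan is to run everything through the representation of F-inverse semigroups as almost semidirect products and to show that the Clifford hypothesis collapses the almost action to something essentially trivial, after which commutativity of $G = M/\sigma$ gives commutativity of $M$ by direct inspection of the multiplication formula.

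First I would invoke the theorem that $M$ is isomorphic to the almost semidirect product $F(Y,G)$ with $G = M/\sigma$ and $Y = E(M)$, writing $\cdot$ for the almost action. Working inside $F(Y,G)$, I would locate the idempotents: since $(y,g)(y,g) = (y \wedge (g\cdot y),\, g^2)$, an element $(y,g)$ is idempotent precisely when $g = 1$, and then every $y \in Y$ qualifies (and $(y,1) \in F(Y,G)$ because $1\cdot 1 = 1$ is the top element of the semilattice $Y$). Hence $E(F(Y,G)) = \{(y,1) : y \in Y\}$.

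Next I would use that $M$ is Clifford, i.e.\ that these idempotents are central. Computing $(y,1)(z,h) = (y \wedge z,\, h)$ and $(z,h)(y,1) = (z \wedge (h\cdot y),\, h)$ and equating gives $y \wedge z = z \wedge (h\cdot y)$ for every $y \in Y$ and every $(z,h) \in F(Y,G)$. Specialising to $z = h\cdot 1$ (which does lie in $F(Y,G)$) and using $h\cdot y \le h\cdot 1$ — which follows from condition (2) of an almost action together with $y \le 1$, since $h\cdot y = h\cdot(y\wedge 1) = (h\cdot y)\wedge(h\cdot 1)$ — I would deduce $h\cdot y = y \wedge (h\cdot 1)$ for all $h \in G$, $y \in Y$. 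Thus, writing $f(h) = h\cdot 1$, the almost action is determined by $f$, and $F(Y,G)$ coincides with the gluing $\Gl(f)$: the underlying set is $\{(y,g): y\le f(g)\}$ in both cases, and in $F(Y,G)$ one has $(y,g)(z,h) = (y \wedge (g\cdot z),\, gh) = (y \wedge z \wedge f(g),\, gh) = (y\wedge z,\, gh)$ using $y \le f(g)$, matching the multiplication of $\Gl(f)$. (One also checks $f(gh)\wedge f(g) = f(g)\wedge f(h)$ from condition (3) with $y = 1$, so this $f$ is a legitimate gluing datum.)

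Finally, for $(y,g),(z,h)\in\Gl(f)$ we have $(y,g)(z,h) = (y\wedge z,\, gh)$ and $(z,h)(y,g) = (z\wedge y,\, hg)$; since $G$ is abelian $gh = hg$, so the two products agree and $M$ is commutative. The only point requiring care is the middle step — extracting $h\cdot y = y\wedge(h\cdot 1)$ from centrality of the idempotents — where one must check that the elements being multiplied genuinely lie in $F(Y,G)$ and invoke the inequality $h\cdot y \le h\cdot 1$; the rest is routine.
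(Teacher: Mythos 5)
Your argument is correct. The paper leaves this corollary unproved, but its intended justification is the representation theorem proved at the end of the section: every F-inverse Clifford monoid is isomorphic to a gluing $\Gl(f)$ with $f(g)=s(g)s(g)\inv$ built from the maximal elements $s(g)$ of the $\sigma$-classes, and in $\Gl(f)$ the product $(y,g)(z,h)=(y\wedge z,gh)$ is visibly commutative once $G$ is abelian. You arrive at the same endpoint but by a genuinely different route: instead of the paper's explicit isomorphism $\phi(x)=(xx\inv,[x])$, $\psi(y,g)=k(y)s(g)$ (which rests on \cref{res:glue}), you start from the almost semidirect product $F(Y,G)$, identify its idempotents as the pairs $(y,1)$, and show that centrality of these forces the almost action to collapse to $h\cdot y = y\wedge(h\cdot 1)$, whence the multiplication degenerates to $(y\wedge z, gh)$. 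All the intermediate verifications (that $(h\cdot 1,h)$ lies in $F(Y,G)$, that $h\cdot y\le h\cdot 1$ via condition (2), that $f(h)=h\cdot 1$ satisfies the gluing identity via condition (3)) check out. Your approach buys a clean structural statement --- Cliffordness is exactly the degeneration of the almost action to meeting with $f(h)=h\cdot 1$ --- which in effect re-proves the paper's final theorem from the almost-semidirect-product side; the paper's route is more self-contained within its own machinery of maximal elements and the cosplitting $\ell(m)=mm\inv$, and yields the slightly stronger explicit description of $f$ in terms of $s$. Either way the corollary follows; note only that, strictly speaking, the commutativity conclusion does not need the identity $f(gh)\wedge f(g)=f(g)\wedge f(h)$ at all, so that paragraph of your write-up is optional.
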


In fact, every F-inverse Clifford semigroup can be represented in this way. 

\begin{lemma}\label{res:glue}
    Let $M$ be an F-inverse, Clifford monoid and consider the function $f\colon M/\sigma \to E(M)$ satisfying that $f(g) = s(g)s(g)\inv$ where $s(g)$ is the largest element in the equivalence class $g$. Then $f(gh) \wedge f(g) = f(g) \wedge f(h)$.
\end{lemma}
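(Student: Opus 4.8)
The statement to prove is that for an F-inverse Clifford monoid $M$, the map $f\colon M/\sigma \to E(M)$ given by $f(g) = s(g)s(g)\inv$, with $s(g)$ the largest element of the $\sigma$-class $g$, satisfies $f(gh) \wedge f(g) = f(g) \wedge f(h)$.

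The plan is to work entirely inside $M$, exploiting two facts: first, that $M$ is Clifford, so every idempotent is central and hence $xx\inv = x\inv x$ for all $x$; second, that the weakly Schreier condition (from the theorem relating F-inverse monoids to weakly Schreier extensions) says precisely that every element $x$ of the $\sigma$-class $g$ factors as $x = e\, s(g)$ for some $e \in E(M)$, equivalently $x \le s(g)$ in the natural order. First I would record the key identity $f(g) = s(g)s(g)\inv = s(g)\inv s(g)$, so $f(g)$ is the (left and right) ``domain/range idempotent'' of $s(g)$, and note that multiplication by an idempotent $e$ on the left of $s(g)$ gives $es(g)$ whose associated idempotent is $e \wedge f(g) = ef(g)$.

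Next I would analyse the product $s(g)s(h)$. Since $q$ is a homomorphism, $s(g)s(h)$ lies in the $\sigma$-class $gh$, hence $s(g)s(h) \le s(gh)$, i.e. there is an idempotent $e$ with $s(g)s(h) = e\, s(gh)$. Taking the associated idempotent of both sides (multiply each side by its inverse, using centrality of idempotents throughout) gives on the left $s(g)s(h)(s(g)s(h))\inv = s(g)s(h)s(h)\inv s(g)\inv = s(g)\,f(h)\,s(g)\inv$, and on the right $e\, s(gh)s(gh)\inv = e \wedge f(gh)$. A symmetric computation — or rather, conjugating $f(h)$ by $s(g)$ — I expect to relate $s(g)\,f(h)\,s(g)\inv$ to $f(g) \wedge (\text{something})$; in the Clifford case conjugation by $s(g)$ of an idempotent $t$ is just $s(g)\,t\,s(g)\inv = f(g) \wedge t'$ for a suitable $t'$, but in fact because idempotents are central, $s(g)\,t\,s(g)\inv = s(g)s(g)\inv t = f(g)\,t = f(g) \wedge t$. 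So the left side becomes simply $f(g) \wedge f(h)$. Thus I obtain $f(g) \wedge f(h) = e \wedge f(gh)$ for some idempotent $e$, which already shows $f(g)\wedge f(h) \le f(gh)$, hence $f(g) \wedge f(h) = f(g) \wedge f(h) \wedge f(gh)$.

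For the reverse inequality $f(gh) \wedge f(g) \le f(h)$ (which together with the trivial $f(gh) \wedge f(g) \le f(g)$ and the above gives equality), I would start from $s(gh)$, which lies in the class $gh$, and consider $s(g)\inv s(gh)$: this lies in the class $g\inv gh = h$, so $s(g)\inv s(gh) \le s(h)$, giving an idempotent $e'$ with $s(g)\inv s(gh) = e' s(h)$. Taking associated idempotents and using centrality as before yields $s(g)\inv s(gh) s(gh)\inv s(g) = s(g)\inv f(gh) s(g) = f(gh) \wedge f(g)$ (same computation as above, now with $g\inv$, noting $s(g)\inv s(g) = f(g)$) on one side, and $e' \wedge f(h)$ on the other, whence $f(gh) \wedge f(g) = e' \wedge f(h) \le f(h)$. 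Combining the two inequalities gives $f(gh) \wedge f(g) = f(g) \wedge f(h)$.

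The main obstacle I anticipate is getting the conjugation identities exactly right: one must be careful that $s(g)\inv$ is genuinely the inverse of $s(g)$ in $M$ and that expressions like $s(g)\inv f(gh) s(g)$ collapse correctly using centrality of idempotents — it is tempting but wrong to treat $\sigma$-class representatives as a group, so every step that slides an idempotent past an element must be justified by Cliffordness, not by group-like cancellation. A secondary point is confirming that $s$ genuinely lands in the right $\sigma$-classes under the maps $x \mapsto s(g)\inv x$; this follows because $q$ is a homomorphism and $q(s(g)\inv) = g\inv$, but it should be stated explicitly.
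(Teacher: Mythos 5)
Your proposal is correct and follows essentially the same route as the paper's proof: both inequalities come from the maximality of $s$ on each $\sigma$-class (so that $s(g)s(h) \le s(gh)$ and $s(g)\inv s(gh) \le s(h)$) combined with centrality of idempotents in the Clifford case. The only difference is presentational --- you extract an explicit idempotent witness $e$ and compare associated idempotents of the two sides, where the paper manipulates the product chains directly.
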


\begin{proof}

    We prove this result in two steps. Note that $s(g)\inv = s(g\inv)$.

    \begin{align*}
        f(g) \wedge f(h)    &= s(g)s(g)\inv s(h)s(h)\inv \\
                            &= s(g)s(h)s(h)\inv s(g)\inv \\
                            &= s(g)s(h)s(h)\inv s(g)\inv s(g)s(g)\inv \\
                            &\le s(gh)s(h\inv g\inv) s(g)s(g)\inv \\
                            &= s(gh)s(gh)\inv s(g)s(g)\inv \\
                            &= f(gh) \wedge f(g).
    \end{align*}

    Note that line 2 follows because idempotents are central by assumption. Line 4 follows because $s(g)s(h)$ belongs to the $gh$ fibre and so compares less than $s(gh)$ (and similarly for $s(h)\inv s(g) \inv$).

    Next consider the following calculation.

    \begin{align*}
        f(gh) \wedge f(g)   &= s(gh)s(gh)\inv s(g)s(g)\inv \\
                            &= s(g)s(g)\inv s(gh)s(h\inv g\inv) s(g)s(g)\inv \\
                            &\le s(g)s(h)s(h)\inv s(g)\inv \\
                            &= f(g) \wedge f(h).
    \end{align*}

    Together this proves the result.
    
\end{proof}

\begin{theorem}
    Let $M$ be an F-inverse, Clifford monoid and define $f\colon M/\sigma \to E(M)$ as in \cref{res:glue}. Then $\Gl(f) \simeq M$.
\end{theorem}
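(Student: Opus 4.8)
The plan is to exhibit mutually inverse monoid homomorphisms between $M$ and $\Gl(f)$, built from the retraction $m \mapsto mm\inv$ (available since $M$ is Clifford) and the section $s$ picking maxima of $\sigma$-classes. Concretely, define $\phi\colon M \to \Gl(f)$ by $\phi(m) = (mm\inv, q(m))$ and $\psi\colon \Gl(f) \to M$ by $\psi(y,g) = y\,s(g)$. Note $\psi$ is well defined because the elements of $\Gl(f)$ are honest pairs, not equivalence classes.

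First I would check $\phi$ lands in $\Gl(f)$, i.e.\ that $mm\inv \le f(q(m))$. Since $s(q(m))$ is by definition the largest element of the $\sigma$-class of $m$, we have $m \le s(q(m))$; and in any inverse semigroup $a \le b$ implies $aa\inv \le bb\inv$, so $mm\inv \le s(q(m))s(q(m))\inv = f(q(m))$. That $\phi$ is a homomorphism is where the Clifford hypothesis enters: $\phi(m)\phi(n) = (mm\inv \wedge nn\inv,\, q(m)q(n))$ whereas $\phi(mn) = (mn n\inv m\inv,\, q(mn))$, and centrality of idempotents gives $mn n\inv m\inv = mm\inv nn\inv = mm\inv \wedge nn\inv$. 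For the identity: by the quoted theorem, E-unitarity of $M$ means $E(M) = q\inv(1)$, so the $\sigma$-class of $1$ is $E(M)$, whose largest element is $1$; hence $s(1) = 1$, $f(1) = 1$, the pair $(1,1)$ lies in $\Gl(f)$ and is its identity, and $\phi(1) = (1,1)$.

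Next I would verify $\psi\circ\phi = \id_M$ and $\phi\circ\psi = \id_{\Gl(f)}$. The first is $\psi\phi(m) = mm\inv\,s(q(m)) = m$, using the standard inverse-semigroup fact that $m \le n$ implies $m = mm\inv n$ applied to $m \le s(q(m))$. For the second, $\psi(y,g) = ys(g)$ lies in the $\sigma$-class $g$ (as $q(y)=1$), and $(ys(g))(ys(g))\inv = y\,s(g)s(g)\inv\,y = y\,f(g) = y \wedge f(g) = y$, using centrality of idempotents again and, crucially, the defining constraint $y \le f(g)$ of $\Gl(f)$; hence $\phi\psi(y,g) = (y,g)$. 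Thus $\phi$ is a bijective monoid homomorphism, so an isomorphism, and $\Gl(f) \simeq M$.

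The computations are all short, so the only things needing care are the well-definedness inequality $mm\inv \le f(q(m))$ and the observation that the constraint $y \le f(g)$ is exactly what is required to recover $(y,g)$ from $ys(g)$. The Clifford hypothesis is used precisely twice — to make $\phi$ multiplicative and to simplify $(ys(g))(ys(g))\inv$ — while the condition $f(gh)\wedge f(g) = f(g)\wedge f(h)$ of \cref{res:glue} is only what is needed for $\Gl(f)$ to be a well-defined inverse monoid in the first place, so it requires no further invocation here.
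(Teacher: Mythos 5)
Your proposal is correct and follows essentially the same route as the paper: the same maps $\phi(m)=(mm\inv,q(m))$ and $\psi(y,g)=y\,s(g)$, the same well-definedness check via $m\le s(q(m))$, the same use of centrality of idempotents for multiplicativity, and the same verification of both composites. Your added remarks on $s(1)=1$ and the identity element are fine but not needed beyond what the paper records.
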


\begin{proof}
    Consider the associated cosplit extension $\cosplitext{E(M)}{k}{\Gl(f)}{q}{\ell}{M/\sigma}$.
    Let $s\colon M/\sigma \to M$, select the largest element in each fibre. Define $\phi\colon M \to \Gl(F)$, $\phi(x) = (xx\inv,[x]) = (\ell(x),q(x))$ and its candidate inverse $\psi\colon \Gl(f) \to M$, $\psi(y,g) = k(y)s(g)$.

    First we show that $\phi$ is well defined. By definition, $x \le sq(x)$ and so $\ell(x) = xx\inv \le sq(x)sq(x)\inv$ as required. Next we show that it is a monoid homomorphism. Consider the following calculation.

    \begin{align*}
        \phi(x)\phi(y)  &= (xx\inv,[x])(yy\inv,[y]) \\
                        &= (xx\inv yy\inv, [xy]) \\
                        &= (xyy\inv x\inv, [xy])\\
                        &= (xy)(xy)\inv,[xy]\\
                        &= \phi(xy).
    \end{align*}

    It remains to show that $\psi$ is the inverse of $\phi$. 

    \begin{align*}
        \phi(\psi(y,g)) &= \phi(k(y)s(g)) \\
                        &= (k(y)s(g)s(g)\inv k(y)\inv, [k(y)s(g)]) \\
                        &= (k(y)k(y)\inv s(g)s(g)\inv,g) \\
                        &= k(y)s(g)s(g)\inv,g) \\
                        &= (y,g).
    \end{align*}    

    Here the last line follows because $k$ is simply the inclusion and $(y,g) \in \Gl(f)$ implies $y \le s(g)s(g)\inv$.

    Finally consider the following equation.

    \begin{align*}
        \psi(\phi(x))   &= \psi(xx\inv,[x]) \\
                        &= k(xx\inv)s([x]) \\
                        &= x.
    \end{align*}    

    Here the last line follows because when $x \le y$ we know $x = xx\inv y$.
\end{proof}

\bibliographystyle{abbrv}
\bibliography{bibliography}
\end{document}